\def\RR{\mathbb R}
\newtheorem{theorem}{Theorem}
\newtheorem{lemma}{Lemma}
\theoremstyle{definition}
\newtheorem{definition}{Definition}
\theoremstyle{remark}
\begin{document}
\rmfamily
\title{Non-existence of polyhedral immersions of triangulated surfaces in $\RR^3$}
\author{Undine Leopold}

\date{\today}
\maketitle
\begin{abstract}
\noindent We present and apply a method for disproving the existence of polyhedral immersions in $\RR^3$ of certain triangulations on non-orientable surfaces. In particular, it is proved that neither of the two vertex-minimal, neighborly $9$-vertex triangulations of the non-orientable surface of genus $5$ are realizable as immersed polyhedral surfaces in $\RR^3$.
\\
\textbf{Keywords:} non-orientable surface, triangulation, geometric realization, polyhedral immersion, self-intersection, obstruction, neighborly 
\\[0.5ex]
\textbf{2010 Mathematics Subject Classification:} 52B70, 57M20 
\end{abstract}

%
%
%
%
\section{Introduction and basic notions}\label{sec:1}

A \emph{closed surface} (without boundary) is a compact Hausdorff space which is locally homeomorphic to $\RR^2$.
For the purposes of this article, we abbreviate the term to \emph{surface} and also require connectedness (which is the usual assumption in this context). Surfaces are classified either as the orientable, $g$-fold connected sum of tori, denoted $M_g$ (sphere with $g$ handles), or the non-orientable, $h$-fold connected sum of projective planes, denoted $N_h$ (sphere with $h$ cross-caps). The orientable \emph{genus} $g$ relates to the Euler characteristic $\chi$ of an orientable surface by $\chi=2-2g$, whereas in the non-orientable case we have $\chi=2-h$ for the non-orientable \emph{genus} $h$.  A \emph{triangulation of a (closed) surface  $S$} is a two-dimensional simplicial complex $\Delta$ whose underlying topological space $|\Delta|$ is homeomorphic to $S$.

It is well-known that orientable surfaces can be smoothly embedded into~$\RR^{3}$, whereas non-orientable surfaces may only be smoothly immersed. 
Due to Steinitz' theorem \cite{s1922:pur, z1995:lop}, every polyhedral map on the sphere can be realized as the boundary complex of a convex $3$-polytope. Archdeacon, Bonnington, and Ellis-Monaghan \cite{abem2007:htetmis} proved that every toroidal map can be \emph{exhibited} in Euclidean $3$-space, with the implication that every triangulated torus is geometrically realizable with flat triangular faces and without self-intersection. For other triangulated orientable surfaces, an analogous general result on the existence of such realizations is not possible; the first counterexample was presented in \cite{bgo2000:otgoom}, and Schewe's result \cite{s2010:nmvtossnuomass} allows the construction of counterexamples for all orientable genera greater than or equal to $5$.  

The realizability question may be extended to triangulations of non-orientable surfaces in the following way. Let $\Delta$ be a triangulation of a surface $S=|\Delta|$, and let $V$ be the set of vertices. Each assignment $\psi:V\rightarrow\RR^3$ of coordinates to the set of vertices induces a simplex-wise linear map $\phi_\psi\colon\left|\Delta\right|\rightarrow\RR^3$. Each simplexwise linear map $\phi_\psi\colon\left|\Delta\right|\rightarrow\RR^3$ is completely determined by its restriction $\psi\colon V\rightarrow\RR^3$ on the set of vertices. We say that $\phi_\psi$ is a \emph{polyhedral realization} or \emph{geometric realization} of $\Delta$ (or $S$) if it is a simplex-wise linear immersion (i.e., it is locally injective). In addition, for triangulations of \emph{orientable} surfaces $S$, $\phi_\psi$ must meet the stricter condition of being an embedding (globally injective). We also use the terms \emph{polyhedral immersion} and \emph{polyhedral embedding} to distinguish among these cases. For either case, local injectivity assures that $i$-dimensional faces of $\Delta$ are mapped to flat, $i$-dimensional simplices in $\RR^3$ (points, line segments, triangles). 

Due to Brehm's result \cite{b1983:antms}, any non-orientable surface possesses a triangulation which is not geometrically realizable. Thus in general, for orientable surfaces of genus $g\geq 5$ and non-orientable surfaces alike, realizability of a triangulation must be decided on a case-by-case basis. Successful realizations of surfaces in $\RR^3$ were obtained mainly by hand or by using heuristic algorithms, see \cite{c1949:apwd,b1981:pmzevgd,mcmsw1983:p2mie3wulg,b1987:amspog3w10v,bb1987:anpog3w10v,b1987:msprodrm,b1989:agrwsidefdrm,bb1989:apog4wmnovams,b1990:htbmpmotbs,c1994:vmsiotkbits,l2008:earrots,b2008:ohmffros,hlz2010:srwtisf,bl2016:peaiomt2mwfv}. 

 A particularly interesting set of test cases are those triangulations with a high degree of connectivity among the vertices, i.e., 
triangulations with the minimal number of vertices for the given genus, or realizations of \emph{neighborly} triangulations of surfaces. A triangulation is \emph{neighborly} when the edge graph is complete, i.e., there is an edge connecting any pair of vertices. Triangulations of a surface of Euler characteristic $\chi$ with $n$ vertices exist when the Heawood bound \cite{h1890:mct} is satisfied, i.e., when
\[n\geq \left\lceil\frac{7+\sqrt{49-24\chi}}{2}\right\rceil,\]
except in the case of the surfaces $M_2$, $N_2$, and $N_3$, where no corresponding triangulations exist, as shown in \cite{r1955:wmdgnfimwdzk, jr1980:mtoos} (there exist triangulations with one additional vertex).
 Neighborly triangulations of surfaces exist whenever $n$ equals the unrounded expression on the right hand side of the Heawood bound, i.e. for Euler characteristic $\chi=2,1,0,-3,-5,-10,\ldots$ and corresponding number of vertices $n=4,6,7,9,10,12,\ldots$.

Bokowski and Sturmfels \cite{bs1989:csg}
have proposed algorithms for deciding the realizability of triangulations and other geometric structures, but the decision process remains computationally difficult and non-realizability results for surfaces have been limited to specific cases of orientable surfaces, e.g. \cite{bgo2000:otgoom, s2010:nmvtossnuomass}. This article investigates non-realizability for triangulations of non-orientable surfaces exclusively, using the expected self-intersection in $\RR^3$ as a starting point. It appears that this self-intersection has not been studied in greater detail, aside from an article by Cervone \cite{c1994:vmsiotkbits}. The aim of this article is to show the usefulness of such an investigation for triangulations with few vertices.

Following a motivating example at the end of Section \ref{sec:2}, the main result is presented in Section \ref{sec:3}; neither of the two vertex-minimal $9$-vertex triangulations of the non-orientable surface of genus $5$ are polyhedrally immersible in $\RR^3$. The proof first appeared in the author's Diploma Thesis \cite{l2009:peaiot2m}, and the result is now published as an article for the first time. Section \ref{sec:4} concludes this paper with some additional remarks.

\section{Properties of immersions}\label{sec:2}

Let $\phi\colon S\rightarrow\RR^3$ be an immersion of a (closed) non-orientable surface $S$ into~$\RR^3$. A point $p\in \phi(S)$ is a \emph{point of self-intersection} if $\phi^{-1}(p)$ consists of more than one point. 
If  $\phi^{-1}(p)$ consists of precisely two, three, or $k$ points, $p$ is called a \emph{double, triple, or $k$-fold point}. The number $\mathrm{card}\left(\phi^{-1}(p)\right)>1$ is called the \emph{order} of the point of self-intersection. The set of all points of self-intersection of $\phi(S)$ in $\RR^3$ is called the \emph{set of self-intersections},  or \emph{double set}, of the immersion and is denoted $D_\phi$.  
The pre-image of $D_\phi$ is called the \emph{singular set} in $S$ and is denoted $\phi^{-1}(D_\phi)$.

\emph{Genericity} or \emph{general position} for an immersion $\phi\colon S\rightarrow\RR^3$ is given when all self-intersections are transversal, at most three disks intersect in a point, which occurs at finitely many points in $\RR^3$, and at these triple points the disks intersect topologically in the way coordinate planes do. For a general position immersion both the double set $D_\phi$ and the singular set $\phi^{-1}(D_\phi)$ consist of the union of closed curves. Furthermore, the double set cannot contain isolated points of $\RR^3$, and the singular set cannot contain isolated points of $S$.

\subsection{Properties of the double set and singular set}

Let $\Delta$ be a triangulation of $S$, and let $V$ be the set of vertices of $\Delta$. Let $\phi_\psi\colon \left|\Delta\right|\rightarrow\RR^3$ be a polyhedral realization (simplex-wise linear immersion) in $\RR^3$ induced by a coordinatization $\psi\colon V\rightarrow \RR^3$ of the vertex set $V$.  
It is possible to perturb the image $\psi(v)$ of each vertex $v\in V$ within a ball of some radius $\varepsilon>0$, such that the induced $\phi_\psi$ is still a polyhedral realization.
With such a perturbation, several undesirable situations may be resolved.
First, since $\Delta$ consists of finitely many triangles and $\phi_\psi$ is simplex-wise linear, the set of intersection points of order three and higher may be dissolved into a discrete (and therefore, finite) set of triple points connected by curve segments composed of double points. 
Thus, as long as no geometric symmetry is required, the existence of a polyhedral realization of a triangulation $\Delta$ of $S$ implies that there exists also a polyhedral realization in general position. 

Second, we may apply another perturbation to shift the double set and singular set according to our preferences. For example, we may move triple points and most double points away from the vertices and edges, obtaining the following property which is minimally stronger than general position of the surface.
\begin{definition}\label{def:1}
We call a polyhedral realization $\phi_\psi\colon \left|\Delta\right|\rightarrow\RR^3$ \emph{polyhedrally generic} if it is in general position and both of the following additional conditions are satisfied:\\
(1) the images of any edge $ab$ and any triangle $cde$, for distinct vertices $a$, $b$, $c$, $d$, $e$, are either disjoint or intersect only in their relative interiors,\\
(2) any triple points lie in the relative interiors of the three (images of) triangle faces which intersect there. 
\end{definition}
A polyhedrally generic immersion exists for any polyhedrally realizable triangulation $\Delta$.

A staple argument from topology  (see also \cite[Lemma on p. 411]{b1974:tpasois} for one variant) is that, generically, a simple closed curve in $\mathbb{R}^3$ pierces an immersed surface in general position (orientable or non-orientable) in an even number of points transversally. 
Using this argument, we obtain two Lemmas. The first statement of Lemma \ref{lem:1} was already a Corollary in \cite{b1974:tpasois}.
\begin{lemma}\label{lem:1}
Let $\phi\colon S\rightarrow\RR^3$ be a general position immersion of a closed surface $S$ into $\RR^3$. Let $C$ be a simple closed orientation-reversing curve embedded in $S$, i.e., every tubular neighborhood of $C$ in $S$ contains a M\"obius strip. Then the closed curve $\phi(C)$ must meet $D_\phi \subseteq \RR^3$ in at least one point. Moreover, if $C$ meets the singular set $\phi^{-1}(D_\phi)$ in a finite number of points, and if all these intersections are transversal, and if furthermore $\phi(C)$ does not pass through any of the triple points of $\phi(S)$, then 
\[\mathrm{card}(C\cap\phi^{-1}(D_\phi))\] 
is odd. 
\end{lemma}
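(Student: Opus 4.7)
My plan is to reduce both parts to the general-position ``staple'' fact quoted just before the lemma: any closed curve in $\RR^3$ meeting $\phi(S)$ transversally away from $D_\phi$ and the triple points does so in an even number of points. For each part I will exhibit an explicit closed curve $\gamma\subset\RR^3$ whose number of intersections with $\phi(S)$ I can compute directly, and then read off the conclusion from even parity.

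For the first statement I argue by contradiction. If $\phi(C)\cap D_\phi=\emptyset$, then on a small tubular neighborhood $U$ of $C$ in $S$---a M\"obius strip, since $C$ is orientation-reversing---the map $\phi$ is injective by compactness, so $\phi(U)$ is an embedded M\"obius strip in $\RR^3$. Shrinking further, I can arrange that some tubular neighborhood $N$ of $\phi(C)$ in $\RR^3$ meets $\phi(S)$ only along $\phi(U)$. Parameterize $C$ by $t\in[0,1]$ and pick a continuous unit normal $n(t)$ to $\phi(U)$ along $\phi(C)$; since $\phi(U)$ is non-orientable, $n(1)=-n(0)$. Then for small $\epsilon>0$ the pushed curve $t\mapsto\phi(C(t))+\epsilon n(t)$ is a path in $N\setminus\phi(S)$ from $\phi(C(0))+\epsilon n(0)$ to $\phi(C(0))-\epsilon n(0)$; closing up by the straight segment through $\phi(C(0))$ yields a simple closed loop meeting $\phi(S)$ transversally in exactly one point, contradicting the staple argument.

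For the parity statement I use essentially the same construction, now extended across the singular intersections. Pick $t_0\in S^1$ with $C(t_0)\notin\phi^{-1}(D_\phi)$ and a continuous unit normal $n(t)$ to the sheet of $\phi(S)$ containing $\phi(C)$ at each $C(t)$; this sheet is unambiguous even at the $p_i\in C\cap\phi^{-1}(D_\phi)$, because by hypothesis those are ordinary double points of $\phi$, not triple points. As before, orientation-reversal forces $n$ to flip sign after one traversal, so the push-off $\alpha(t):=\phi(C(t))+\epsilon n(t)$ is an arc from $\phi(C(t_0))+\epsilon n(t_0)$ to $\phi(C(t_0))-\epsilon n(t_0)$; close it with the straight segment $\sigma$ through $\phi(C(t_0))$ to form a closed curve $\alpha'$. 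I now enumerate $|\alpha'\cap\phi(S)|$: the segment $\sigma$ contributes one intersection at $\phi(C(t_0))$; the push-off $\alpha$ stays $\epsilon$ off the sheet containing $\phi(C)$, so can meet $\phi(S)$ only near the $p_i$. In a local model at each such $p_i$ where the sheet $A$ containing $\phi(C)$ is the plane $\{z=0\}$, the other sheet $B$ is $\{y=0\}$, and $\phi(C)$ runs along the $y$-axis, the push-off takes the form $(0,y,\epsilon)$, which crosses $B$ transversally in the single point $(0,0,\epsilon)$ and never meets $A$. Therefore $|\alpha'\cap\phi(S)|=k+1$, where $k:=\mathrm{card}(C\cap\phi^{-1}(D_\phi))$; the staple argument forces $k+1$ to be even, so $k$ is odd. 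The principal technical step is choosing $\epsilon$ small uniformly in $t$ so that this local picture captures \emph{every} intersection of $\alpha'$ with $\phi(S)$, which is possible because $C$ is compact, the sets $C\cap\phi^{-1}(D_\phi)$ and the triple-point set of $\phi$ are both finite, and $\phi(C)$ avoids every triple point.
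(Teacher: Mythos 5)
Your proof takes essentially the same route as the paper: both arguments push $\phi(C)$ off the surface along a normal field, exploit the sign flip of the normal after one traversal (equivalently, the M\"obius neighborhood) to produce a closed curve that transversally pierces $\phi(S)$ exactly once at the closing segment and once for each point of $C\cap\phi^{-1}(D_\phi)$, and then invoke the even-parity fact. The difference is one of presentation rather than strategy. The paper first replaces $C$ by a nearby curve $\tilde{C}$ so that $\phi(\tilde{C})$ is a \emph{simple} closed curve (avoiding the case where $C$ runs through both preimages of the same double point), then speaks somewhat informally of ``lifting $\phi(\tilde{C})$ off the M\"obius strip.'' You instead make the lift concrete with the explicit push-off $\phi(C(t))+\epsilon\,n(t)$ and the straight closing chord, which is cleaner and makes the local count at each $p_i$ transparent.

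The one point you gloss over is the purpose of the paper's $\tilde{C}$-step: if two of the $p_i$ map to the same double point, $\phi(C)$ is not simple, and then $\alpha'$ need not be simple either (the two push-off arcs near that double point can cross one another for special geometry, e.g. when both arcs of $\phi(C)$ meet the double curve at right angles). You phrase the ``staple'' fact for arbitrary closed curves, which indeed suffices and is true (it is a mod-$2$ intersection-number statement, insensitive to self-crossings of the test loop), but the paper states it only for simple closed curves, and you neither verify simplicity of $\alpha'$ nor remark that simplicity is unnecessary. This is easily patched—either add a line justifying the even-parity statement for non-simple closed loops, or prepend the same small isotopy of $C$ the paper uses—but as written it is a gap relative to the version of the auxiliary fact the paper provides.
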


\begin{proof}
We remark that since $C$ is orientation-reversing, $S$ is necessarily a non-orientable surface. First, assume that $\phi(C)$ does not meet $(D_\phi)$. Then $\phi(C)$ is a \emph{simple} closed curve in $\RR^3$. Furthermore, there exists a non-orientable tubular neighborhood of $C$ which does not contain any points of $\phi^{-1}(D_\phi)$.  Its image under $\phi$ is an embedded M\"obius strip in $\RR^3$ and contains no points of $D_\phi$. Consequently, it is possible to slightly distort  $\phi(C)$ (i.e., homotope in a neighborhood)
 such that the resulting simple closed curve $\hat{C}$ pierces the embedded M\"obius strip exactly once transversally and contains no other points of $\phi(S)$. This is impossible (an even number of transversal intersections is required), and hence $C$ must meet $\phi^{-1}(D_\phi)$ at least once, and $\phi(C)$ must meet $D_\phi$ in at least one point.

Assume now that $C$ meets $\phi^{-1}(D_\phi)$ in a finite number of points, such that $\phi(C)$ does not contain triple points of $\phi(S)$. Assume further that all of these intersections of $C$ and $\phi^{-1}(D_\phi)$ are transversal. Then it is always possible to distort $C$ slightly such that the resulting simple closed, orientation-reversing curve $\tilde{C}$ has a simple closed image $\phi(\tilde{C})$ which avoids all triple points, while $\tilde{C}$ possesses the same (finite) number of (transversal) intersections with $\phi^{-1}(D_\phi)$ as $C$ (although not necessarily at the same points). In other words, $\tilde{C}$ maintains all properties of $C$ and just avoids running through pairs of points in the singular set which are pre-images of the same double point in $\RR^3$. Then, $\mathrm{card}\left(\tilde{C}\cap \phi^{-1}(D_\phi)\right)=\mathrm{card}\left(\phi(\tilde{C})\cap D_\phi\right)$.

By definition of $C$ and $\tilde{C}$, there exists a non-orientable tubular neighborhood $T(\tilde{C})$ of $\tilde{C}$ in $S$ such that its image $\phi(T(\tilde{C}))$ is an \emph{embedded} M\"obius strip. The simple closed curve $\phi(\tilde{C})$ can be distorted slightly in such a way that it is lifted off the M\"obius strip, except for one point where it pierces the strip transversally. 
Furthermore, the lift can be done in such a fashion that each point in $\phi(\tilde{C})\cap D_\phi$ corresponds to exactly one point of transversal intersection of the lifted curve $\hat{C}$ with $\phi(S)\setminus \phi(T(\tilde{C}))$, i.e. $\mathrm{card}(\phi(\tilde{C})\cap D_\phi)= \mathrm{card}(\hat{C}\cap \phi(S))-1$, and 
 $\hat{C}\cap D_{\phi}=\emptyset$.
 
Now, the number of transversal intersections of $\hat{C}$ and $\phi(S)$ is even and, by the considerations above, equal to the number of (transversal) intersections of $\tilde{C}$ with $\phi^{-1}(D_\phi)$ plus one. Therefore \[\mathrm{card}\left(\tilde{C}\cap\phi^{-1}(D_\phi)\right)=\mathrm{card}\left(C\cap\phi^{-1}(D_\phi)\right)\] must be odd.
\end{proof}
In a second Lemma, the corresponding statement for simple closed orientation-preserving curves embedded in $S$, i.e., curves possessing an orientable tubular neighborhood, is presented.
\begin{lemma}\label{lem:2}
Let $\phi\colon S\rightarrow\RR^3$ be a general position immersion of a closed surface $S$ into $\RR^3$. Let $C$ be a simple closed orientation-preserving curve embedded in $S$. If $C$ meets the singular set $\phi^{-1}(D_\phi)$ in a finite number of points, and if all these intersections are transversal, and if furthermore $\phi(C)$ does not pass through any of the triple points of $\phi(S)$, then 
\[\mathrm{card}(C\cap\phi^{-1}(D_\phi))\] 
is even.
\end{lemma}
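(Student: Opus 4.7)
The plan is to mirror the argument used for Lemma~\ref{lem:1}, exploiting the key simplification that an orientation-preserving simple closed curve in $S$ admits an orientable (annular) tubular neighborhood rather than a M\"obius one. In the proof of Lemma~\ref{lem:1}, the one-sided M\"obius neighborhood forced a single mandatory transversal intersection when its core circle was lifted off, which is exactly what produced the odd-parity offset; here, the two-sided annulus imposes no such mandatory puncture, so the count should come out even directly.

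First I would perform the same preliminary perturbation as in the previous proof: replace $C$ by a $C^0$-close simple closed orientation-preserving curve $\tilde{C}$ in $S$ such that $\phi(\tilde{C})$ avoids the triple points of $\phi(S)$ and such that no two points of $\tilde{C}\cap\phi^{-1}(D_\phi)$ share a common image in $D_\phi$. Smallness of the perturbation preserves transversality, the total intersection count with $\phi^{-1}(D_\phi)$, and orientation-preservingness, so
\[
\mathrm{card}\bigl(C\cap\phi^{-1}(D_\phi)\bigr) \;=\; \mathrm{card}\bigl(\tilde{C}\cap\phi^{-1}(D_\phi)\bigr) \;=\; \mathrm{card}\bigl(\phi(\tilde{C})\cap D_\phi\bigr).
\]
Now choose an orientable tubular neighborhood $T(\tilde{C})$ of $\tilde{C}$ in $S$ narrow enough that $\phi(T(\tilde{C}))$ is an embedded annulus in $\RR^3$. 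Because this annulus is two-sided, $\phi(\tilde{C})$ can be pushed a small distance to one fixed normal side, producing a simple closed curve $\hat{C}\subset\RR^3$ disjoint from $\phi(T(\tilde{C}))$. If the push is small enough, each point of $\phi(\tilde{C})\cap D_\phi$ -- a transversal meeting of $\phi(\tilde{C})$ with a sheet of $\phi(S)\setminus\phi(T(\tilde{C}))$ -- persists as exactly one transversal intersection of $\hat{C}$ with that same sheet, and no new intersections with $\phi(S)$ are introduced. Consequently $\mathrm{card}(\hat{C}\cap\phi(S))=\mathrm{card}(\phi(\tilde{C})\cap D_\phi)$.

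Finally I would invoke the staple topological fact quoted before Lemma~\ref{lem:1}: a simple closed curve in $\RR^3$ in general position with an immersed closed surface meets it in an even number of transversal points. Applying this to $\hat{C}$ and $\phi(S)$, and chaining through the equalities above, yields that $\mathrm{card}(C\cap\phi^{-1}(D_\phi))$ is even. The only technical point that needs care is verifying that the normal push off the annulus neither creates nor destroys intersections with $\phi(S)\setminus\phi(T(\tilde{C}))$; this is the two-sided analogue of the M\"obius lift already carried out in Lemma~\ref{lem:1} and is in fact strictly easier, since no forced initial puncture of the neighborhood is needed.
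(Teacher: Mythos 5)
Your proof is correct and is precisely the argument the paper intends by the one-word proof ``Analogous'': it mirrors the proof of Lemma~\ref{lem:1} step for step, with the only change being that the two-sided annular neighborhood lets $\phi(\tilde{C})$ be pushed entirely off to one side with no forced puncture, so the parity is even rather than odd.
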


\begin{proof}
Analogous.
\end{proof}

\subsection{Polyhedral immersions of triangulations of non-orientable surfaces}

Triangulations are simplicial complexes giving \emph{polyhedral} decompositions of closed surfaces, in the sense that no points on the boundary of any simplex (face) are identified, and the intersection of any two simplices (faces) is again a simplex (face) of the triangulation, possibly the empty simplex. Consequently, triangulations can be given as a list of triangles $abc$, $def$, etc., with vertices $a$, $b$, $c$, $d$, $e$, $f$, etc. For background on more general \emph{polyhedral maps} and their realizations see \cite{bs1997:pm}.

We denote with $\phi_{\psi}(a)$ the image of vertex $a$, with $\phi_{\psi}(ab)$ the image of edge $ab$, and $\phi_{\psi}(abc)$ denotes the image of triangle $abc$. For convenience, we omit the map $\phi_{\psi}$ in most instances. The term \emph{immersed vertex (edge, triangle)} is used as shorthand for the image of a vertex (edge, triangle) under an immersion $\phi_{\psi}$, wherever additional clarification is necessary. 

\begin{figure}[bth]
        \centering
        {\includegraphics[width=0.5\linewidth]{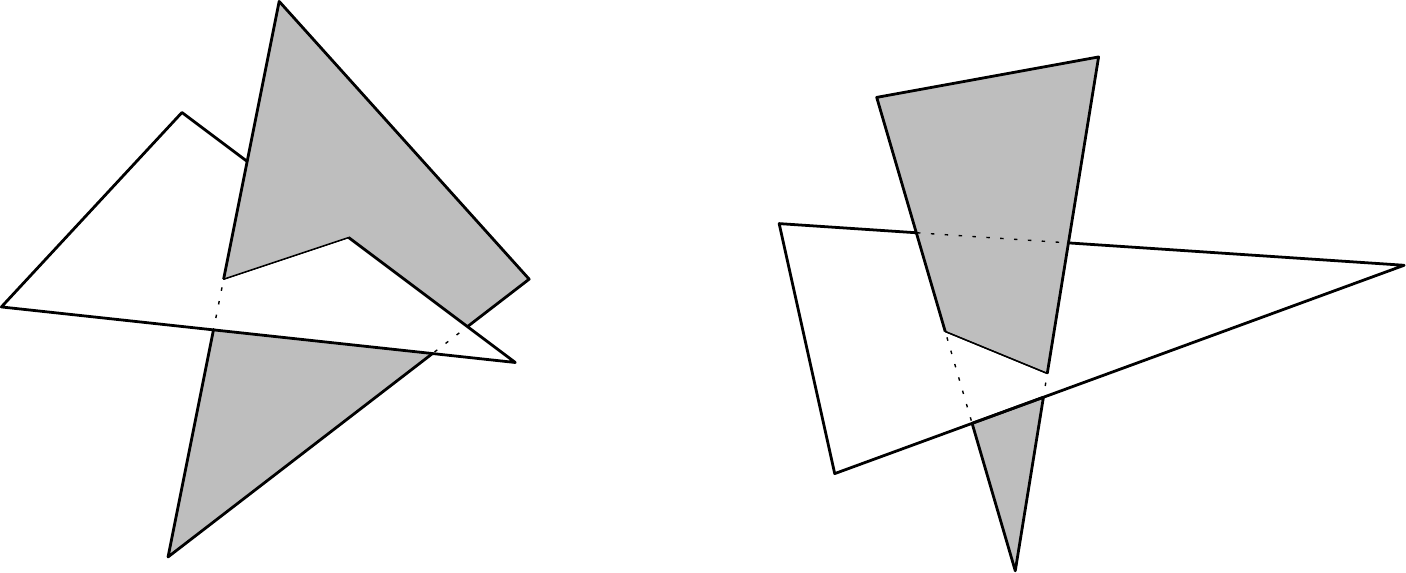}}
        \caption{How triangles intersect in a polyhedrally generic realization.}
        \label{fig:1}
\end{figure}

\begin{paragraph}{Topological methods in the discrete setting.}
Consider a triangulation~$\Delta$ of a non-orientable closed surface $S$ with vertex set $V$. By the remarks preceding Definition \ref{def:1}, we may assume that there exists a polyhedrally generic realization (immersion) $\phi_{\psi}\colon \left|\Delta\right|\rightarrow \RR^3$ into $\RR^3$ induced by a coordinatization $\psi\colon V\rightarrow \RR^3$ of the vertices.

No two immersed triangles with a shared vertex or edge in $\Delta$ can intersect other than at the common vertex or edge, because that would produce a contradiction to the local injectivity of the immersion $\phi_\psi$. Consequently, vertex-disjoint immersed triangles in $\RR^3$ can only intersect in one of the two ways pictured in Figure \ref{fig:1}. Note that of the six combined edges of both triangles, precisely two must pierce the respective other triangle. 
Furthermore, both the double set of $\phi_{\psi}$ in $\RR^3$ and the singular set in $S$ consist of polygonal curve segments which do not pass through the (immersed) vertices. 
By Definition \ref{def:1}, each edge in $\Delta$ possesses a finite number of intersections with the singular set, 
and all these intersections are transversal. 

These properties make simple edge cycles of the triangulation ideal objects for the application of the Lemmas~\ref{lem:1} and~\ref{lem:2}. If we find a simple orientation-preserving cycle $C$ in the edge graph of $\Delta$, i.e., a cycle possessing an orientable tubular neighborhood in $|\Delta|$, the number of intersections with the singular set has to be even in order to comply with polyhedral immersibility. By contrast, if $C$ is orientation-reversing, i.e., each tubular neighborhood contains a M\"obius strip, then the number of intersections with 
the singular set has to be odd and, particularly, positive.

\end{paragraph}

\begin{figure}[htb]
        \centering
        {\includegraphics[width=0.5\linewidth]{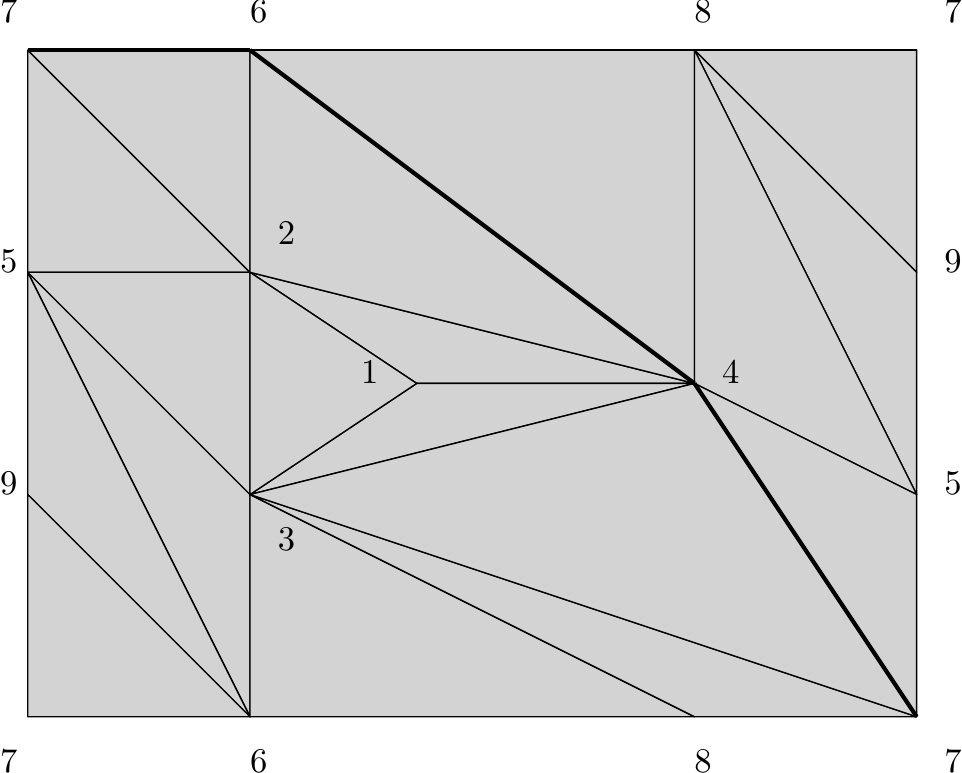}}
        \caption{A polyhedrally non-realizable triangulation of the Klein Bottle $N_2$.}
        \label{fig:2}
\end{figure}

\begin{paragraph}{A Klein bottle example.}
The triangulation in Figure \ref{fig:2} is one out of $187$ possible triangulations of the Klein bottle with $9$ vertices. Specifically, it is the eighth triangulation in the file provided on Lutz's Manifold Page \cite{lutz:manifoldpage}. We begin by assuming that a polyhedral realization, without loss of generality a polyhedrally generic realization, exists. We see that the curve underlying the edge cycle $67-46-47$ in a polyhedral immersion, marked in bold in Figure \ref{fig:2}, is orientation-reversing. By Lemma~\ref{lem:1}, this curve must meet the singular set an odd number of times. Observe that edge $46$ may not pierce any triangle in an immersion, since triangles sharing vertices may not intersect, and every triangle in the triangulation shares a vertex with either triangle $246$ or triangle $468$. Moreover, edge $47$ may not pierce any triangle, since every triangle in the triangulation shares a vertex with either triangle $347$ or triangle $457$. Considering how triangles intersect in a polyhedrally generic immersion (see Figure \ref{fig:1}), triangles $679$ and $458$ may not intersect, as edges $69$, $79$, $45$, $48$, $58$ may not pierce the respective other triangle (for similar reasons as before). Consequently, edge $67$ also cannot pierce triangle $458$. Next, triangles $257$ and $134$ may not intersect, as edges $25$, $57$, $13$, $14$, $34$ cannot pierce the respective other triangle. Therefore edge $27$ may not pierce triangle $134$. However, this also means that $267$ and $134$ may not intersect, as edges $13$, $14$, $34$, $26$, and $27$ cannot pierce the respective other triangle. This implies that $67$ also cannot pierce $134$.  Now, since all triangles besides $134$ and $458$ share a vertex with either $267$ or $679$, and we have shown that $67$ pierces neither $134$ nor $458$, edge $67$ cannot pierce any triangle and the cycle $67-46-47$ does not meet the singular set. This contradiction proves that the triangulation is not polyhedrally realizable.
\end{paragraph}

\section{Nine vertices do not suffice for a polyhedral realization of $N_5$}\label{sec:3}

The non-orientable surface $N_5$ of genus $5$ can be triangulated using $9$ vertices in precisely two combinatorially distinct ways, as listed on Lutz's \emph{Manifold Page} \cite{lutz:manifoldpage}. It cannot be triangulated with fewer vertices due to the Heawood bound \cite{h1890:mct}, see Section \ref{sec:1}.
The Euler characteristic of $\chi=-3$ and $|V|=9$ vertices require the triangulations to 
possess $36={9 \choose 2}$ edges and $24$ triangle faces. Observe that the edge graph is complete, i.e.,
 both possible triangulations are neighborly. Therefore, polyhedral immersibility should be particularly difficult to achieve. We will now prove (by contradiction) that it is indeed impossible.

\subsection{The first triangulation}\label{subsec:first}
\begin{figure}[htbp]
        \centering
        {\includegraphics[width=0.5\linewidth]{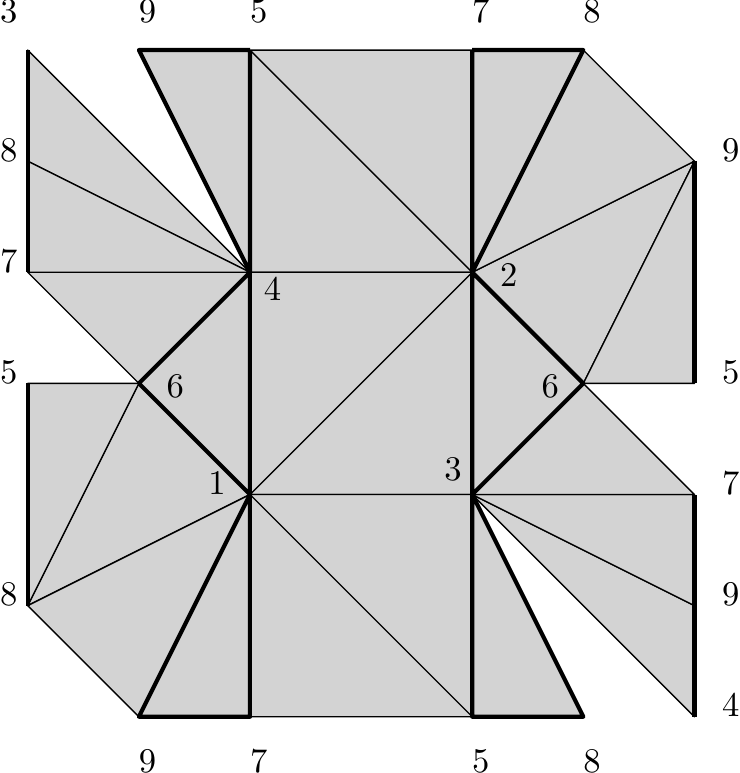}}
        \caption{The $9$-vertex-triangulation $\Delta_1$ of $N_5$.}
        \label{fig:3}
\end{figure}

\begin{figure}[htbp]
        \centering
        {\includegraphics[width=0.8\linewidth]{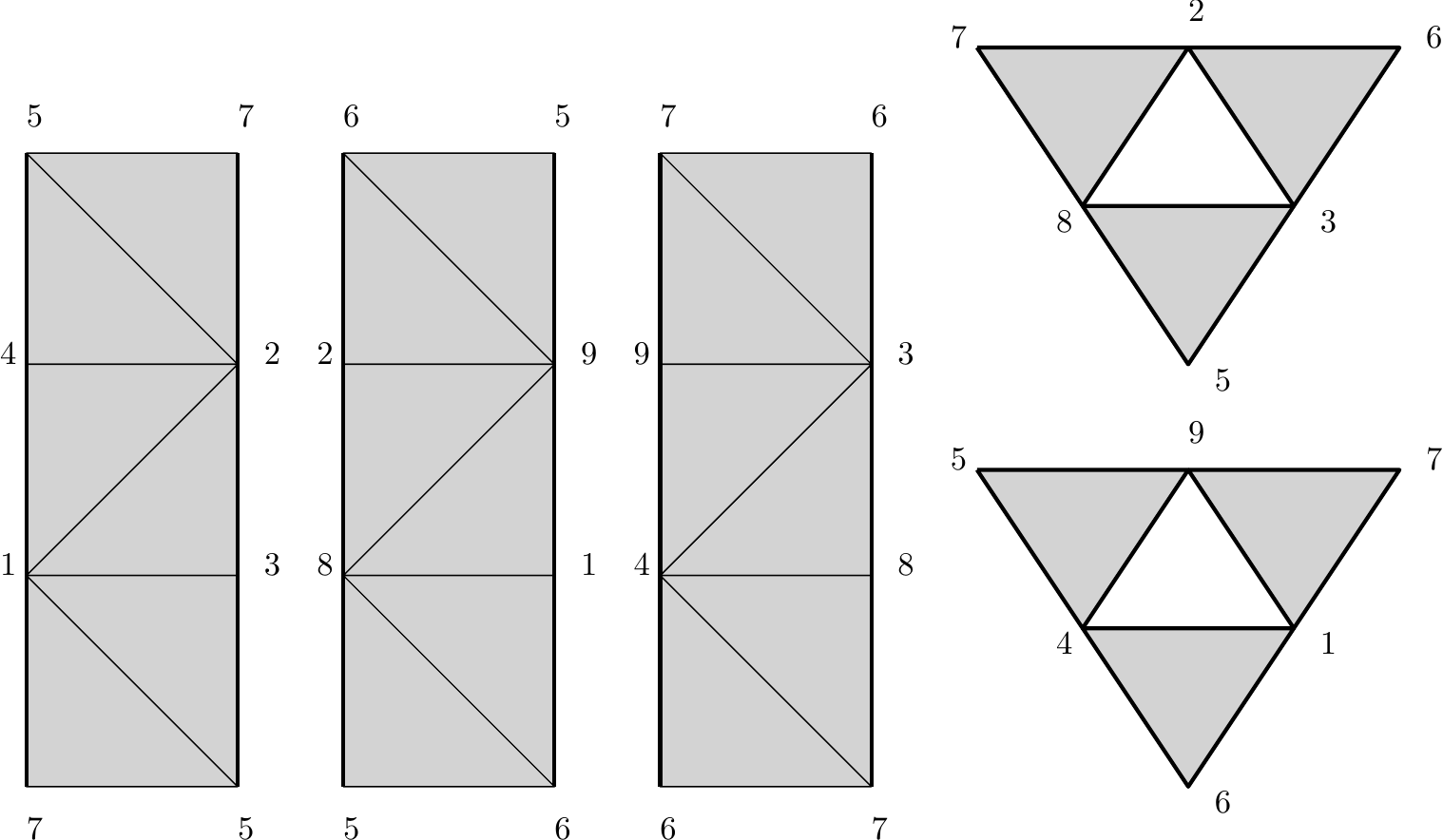}}
        \caption{$\Delta_1$ decomposes into three triangulated M\"obius strips and six remaining triangles.}
        \label{fig:4}
\end{figure}
Let $\Delta_1$ denote the first $9$-vertex triangulation of $N_5$, diagrammed in Figure \ref{fig:3}, and assume the existence of a polyhedral immersion of this triangulation. Then recall that, without loss of generality, we may choose a polyhedrally generic immersion $\phi_{\psi}\colon\left|\Delta_1\right|\rightarrow\RR^3$.

The triangulation $\Delta_1$ is composed of three triangulated M\"obius strips and six additional triangles, see Figure \ref{fig:4}. The automorphism induced by the permutation $\left(1 8 4 2 9 3\right)\left(5 6 7\right)$ on the vertices exchanges these M\"obius strips, and also exchanges the edges $56$, $57$, and $67$. Thus the three M\"obius strips are combinatorially equivalent and, also, edges $56$, $57$, and $67$ are combinatorially equivalent. The curve underlying the edge cycle $56-67-57$ is orientation-reversing in $\left|\Delta_1\right|$ (which can be derived from Figure \ref{fig:3}).  Lemma \ref{lem:1} implies that this curve meets 
the singular set an odd number of times. 

\subsubsection{The intersection table}

We introduce a so-called \emph{intersection table} as a useful tool for our proof. Consider the type of diagram depicted in Figure~\ref{fig:5}. This intersection table records intersections of edges and triangles, and therefore also records intersections of pairs of triangles. 

Triangle labels are noted on top of and to the left of the table, marking rows and columns. For triangles $abc$ and $def$, the box in the column marked $abc$ and row marked $def$ may contain the labels of their edges, namely $ab$, $bc$, $ac$, $de$, $ef$, and $df$. This notation only requires boxes above the main diagonal of the table (otherwise the information would be doubled). For better readability, the triangle labels marking the rows have been moved to the diagonal. 

The occurrence of edge labels in a box is decided based on a low-level obstruction to the intersection of edges and triangles, called \emph{edge-cut-analysis} by Cervone \cite{c1994:vmsiotkbits}. Specifically, for triangles $abc$ and $def$, edge $ab$ appears in the corresponding box if and only if triangles $abd$, $abe$, $abf$ are \emph{not} part of~$\Delta_1$; if one of these triangles is in $\Delta_1$, $ab$ can certainly not pierce $def$ without conflicting with the immersion property.
Empty boxes have been left out, and some rows and columns with empty boxes have been cropped entirely. For intersections pertaining to a specific triangle $abc$, the row marked $abc$ as well as the column marked $abc$ have to be consulted if they appear in the table. 

 \begin{figure}[htbp]
        \centering
        {\includegraphics[width=1.0\linewidth]{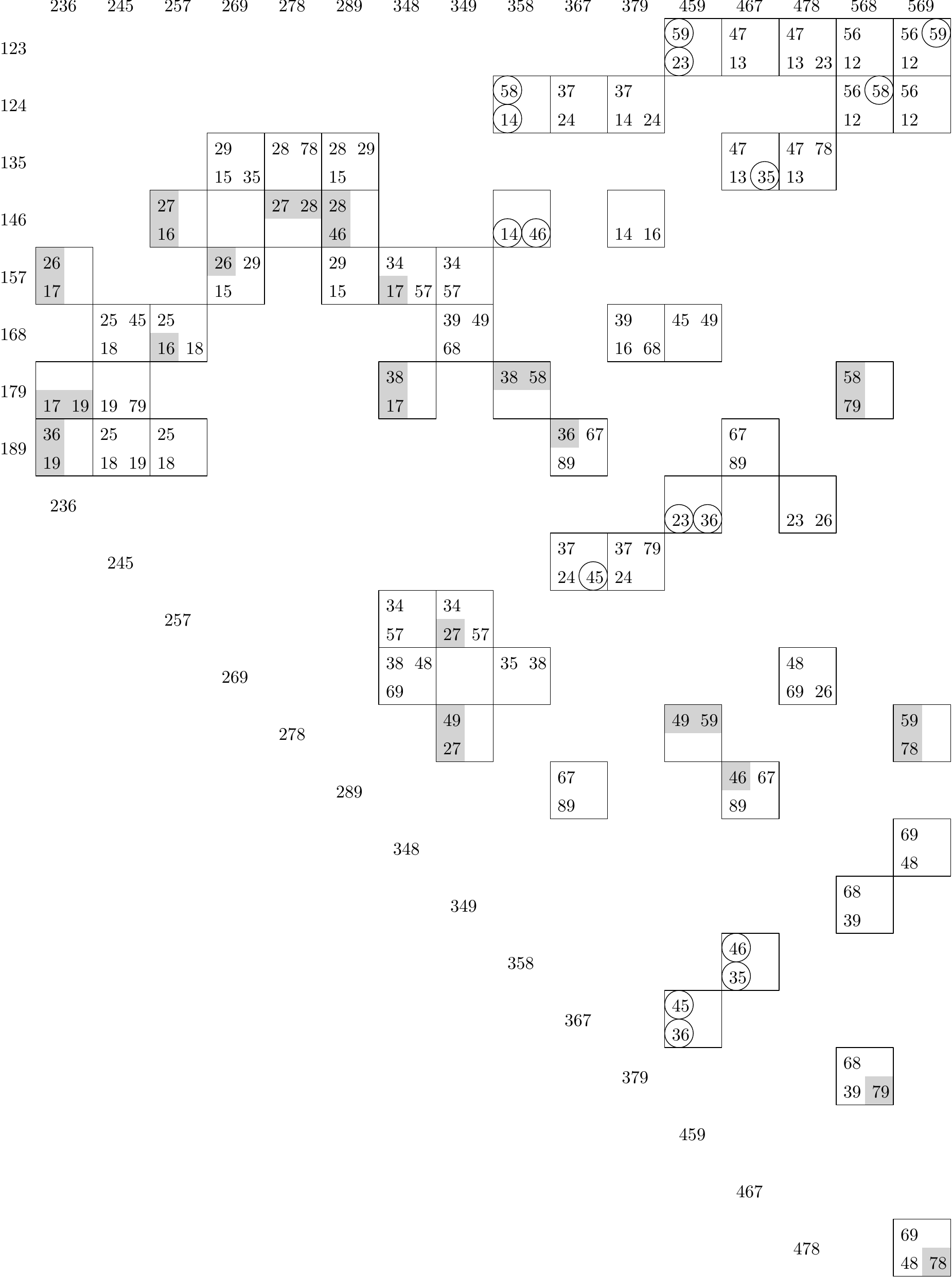}}
        \caption{Tracing the singular set in $|\Delta_1|$, first case.}
        \label{fig:5}
\end{figure}

Note that for the intersection table of $\Delta_1$ in Figure~\ref{fig:5} (and also in Figure~\ref{fig:8}), each edge label appears in precisely four boxes.
This means, in particular, that each of the edges $56$, $57$, and $67$ of the orientation-reversing cycle $56-67-57$ can meet 
the singular set at most two times. Therefore, either exactly one or each of these edges meets the singular set 
exactly once, in order to give an odd total of intersections of the edge cycle with the singular set. This gives rise to two cases, which we will examine subsequently. Before, however, we explain how to work with the intersection table. 

\begin{paragraph}{Tracing the course of the singular set.}
We attempt to trace the course of the singular set by circling or shading edge labels in the intersection table. 
An edge label is circled in a box if the edge pierces a triangle associated to the box, i.e., the triangle of which it is not a side, in a purported realization $\phi_\psi$ of the triangulation.  Considering how triangles intersect in space if $\phi_{\psi}$ is polyhedrally generic, see Figure \ref{fig:1}, exactly two or no edge labels at all have to be circled in each box. If it is clear that there is no intersection, the respective edge labels are shaded. In particular, if there is only one unshaded edge label in a box, the respective triangles may not intersect after all and we shade that label. If two edge labels in the same box are circled, then the remaining edge labels become shaded as the intersection of the associated triangles is determined completely. 

Recall that if an edge $ab$ pierces a triangle $efg$, both triangles incident with $ab$, say $abc$ and $abd$, intersect with $efg$. The diagram allows us to display these connections. Suppose we circle edge label $ab$ in the box corresponding to triangles $abc$ and $efg$. Then we also have to circle edge label $ab$ in the box corresponding to triangles $abd$ and $efg$ because we have an intersection of edge $ab$ and triangle $efg$. 

The same holds for the exclusion of intersections, marked by shading edge labels. Suppose edge label $ab$ is shaded in the box corresponding to triangles $abc$ and $efg$. This means that the intersection of edge $ab$ and triangle $efg$ is excluded. 
Consequently, edge label $ab$ must also be shaded in the box corresponding to triangles $abd$ and $efg$.  \end{paragraph}

\begin{figure}[htbp]
\centering
\includegraphics[width=0.2\linewidth]{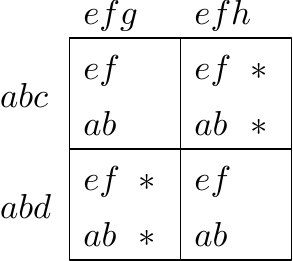}
\caption{Edges $ab$ and $ef$ are combinatorially coupled if they appear only in these boxes. The asterisks signify possible additional edge labels.}
\label{fig:6} 
\end{figure}

\begin{paragraph}{Combinatorially coupled edges.}
We shall pay special attention to the following situation. Consider two edges $ab$ and $ef$, and the triangles $abc$, $abd$, $efg$, $efh$ incident with them. Assume $ab$ and $ef$ appear in all four of the associated boxes, and in no other boxes. That is, if $ab$ pierces any triangle it must be $efg$ or $efh$, and if $ef$ pierces any triangle, it must be $abc$ or $abd$, and any triangle in $\Delta_1$ besides the four previously mentioned possesses a vertex in $\{a,b,c,d\}$ and a vertex in $\{e,f,g,h\}$. Assume further that the box associated to $abc$ and $efg$, as well as the box associated to $abd$ and $efh$, each only contain the labels $ab$ and $ef$, as in Figure \ref{fig:6}. The other two boxes may contain additional labels.
If edges $ab$ and $ef$ satisfy all of the above conditions, we call the pair $\left\{ab,ef\right\}$ \emph{combinatorially coupled}, since it is easily verified that $ab$ and $ef$ must possess the same number of (transversal) intersections with
the singular set. 
\end{paragraph}

\begin{figure}[htbp]
        \centering
        \subfigure[$12$ and $56$ meet the singular set exactly twice each.]
        {\includegraphics[width=0.2\linewidth]{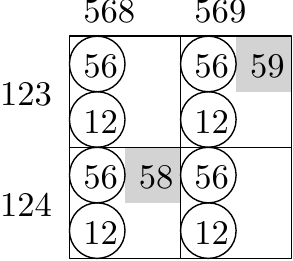}}\quad
        \subfigure[$12$ and $56$ do not meet the singular set.]
        {\includegraphics[width=0.2\linewidth]{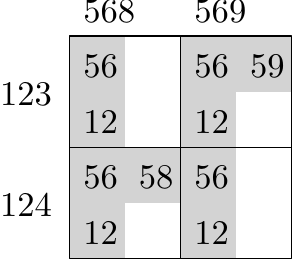}}\quad
        \subfigure[$12$ and $56$ meet the singular set exactly once each.]
        {\includegraphics[width=0.2\linewidth]{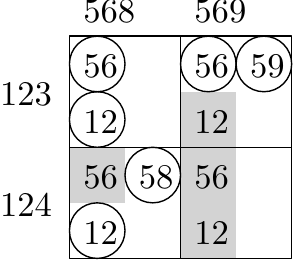}}\quad
        \subfigure[$12$ and $56$ meet the singular set exactly once each.]
        {\includegraphics[width=0.2\linewidth]{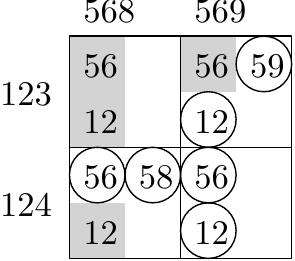}}
        \caption{Combinatorially coupled edges.}    
        \label{fig:7}
\end{figure}

\begin{paragraph}{An example of combinatorially coupled edges for $\Delta_1$.}
We illustrate combinatorially coupled edges with the concrete example of edges $56$ and $12$ of the triangulation $\Delta_1$ of $N_5$. 
Figure \ref{fig:7} shows the boxes of Figure \ref{fig:5} in which the labels $56$ and $12$ appear. Suppose edge $56$, common to triangle $568$ and $569$, pierces both (does not pierce either) triangles $123$ and $124$, i.e., it is circled (shaded) in all four boxes, then $12$ pierces both (does not pierce either) triangles $568$ and $569$, and vice versa, see Figure \ref{fig:7} (a) and (b). Then the number of intersections of $12$ and $56$ with the singular set is the same, namely two or zero. Note that in this case $59$ does not intersect triangle $123$, and that $58$ does not intersect triangle $124$ (both become shaded).

By contrast, suppose $56$ is only circled in the top (bottom) row, i.e. edge $56$ pierces triangle $123$ ($124$) but not triangle $124$ ($123$). Then edge $12$ must pierce triangle $568$ ($569$), whereas it must not pierce triangle $569$ ($568$), see Figure \ref{fig:7} (c) and (d). Thus both $12$ and $56$ intersect the singular set precisely once.  
Also, $58$ must intersect $124$ and $59$ must intersect $123$ (both become circled). 

Note that whether the additional edge labels $59$ and $58$ are circled or not circled in these boxes, respectively, depends solely on the parity of the number of intersections of $56$ and $12$ with the singular set. This is because $58$ and $59$ are the only additional edge labels in the displayed boxes. We use this property, which also occurs for the other combinatorially coupled pairs of $\Delta_1$, namely $\left\{57,34\right\}$ and $\left\{67,89\right\}$, in the following case distinction.

\end{paragraph}

\subsubsection{Case distinction}

\begin{paragraph}{First case.}
Without loss of generality (equivalence of edges $56$, $57$, $67$ under automorphism) we assume that edge $56$ meets 
the singular set exactly once, whereas edges $57$ and $67$ each meet the singular set 
an even number of times (i.e., exactly twice or not at all). We use the intersection table in Figure \ref{fig:5} to mark those intersections and non-intersections which we can derive for this case. Since edges $34$ and $89$ are coupled with $57$ and $67$, respectively, they each meet 
the singular set exactly twice or not at all, and edge $12$ meets 
the singular set exactly once, due to being coupled with edge $56$. From this we conclude that edge $59$ must pierce triangle $123$ and edge $58$ must pierce triangle $124$. Furthermore, there are several excluded intersections; edge $17$ and triangle $348$, edge $27$ and triangle $349$, edge $36$ and triangle $189$, as well as edge $46$ and triangle $289$, each have no point in common. 

Starting with this inital setup of shaded and circled edge labels, some consequences are derived in Figure \ref{fig:5} by the tracing process described earlier. Clearly, edges $58$ and $59$ have exactly one intersection with 
the singular set, while $89$ possesses an even number (zero or two). This means that cycle $58-89-59$ intersects the singular set an even number of times. 
 However, we can check that the curve underlying cycle $58-89-59$ is orientation-reversing (as it possesses no orientable neighborhood in $\left|\Delta_1\right|$) and thus should have an odd number of intersections with the singular set. 
 This is a contradiction to Lemma \ref{lem:1}, proving that this first case cannot occur in a polyhedral realization of the first triangulation.
\end{paragraph}

\begin{figure}[htbp]
        \centering
        {\includegraphics[width=1.0\linewidth]{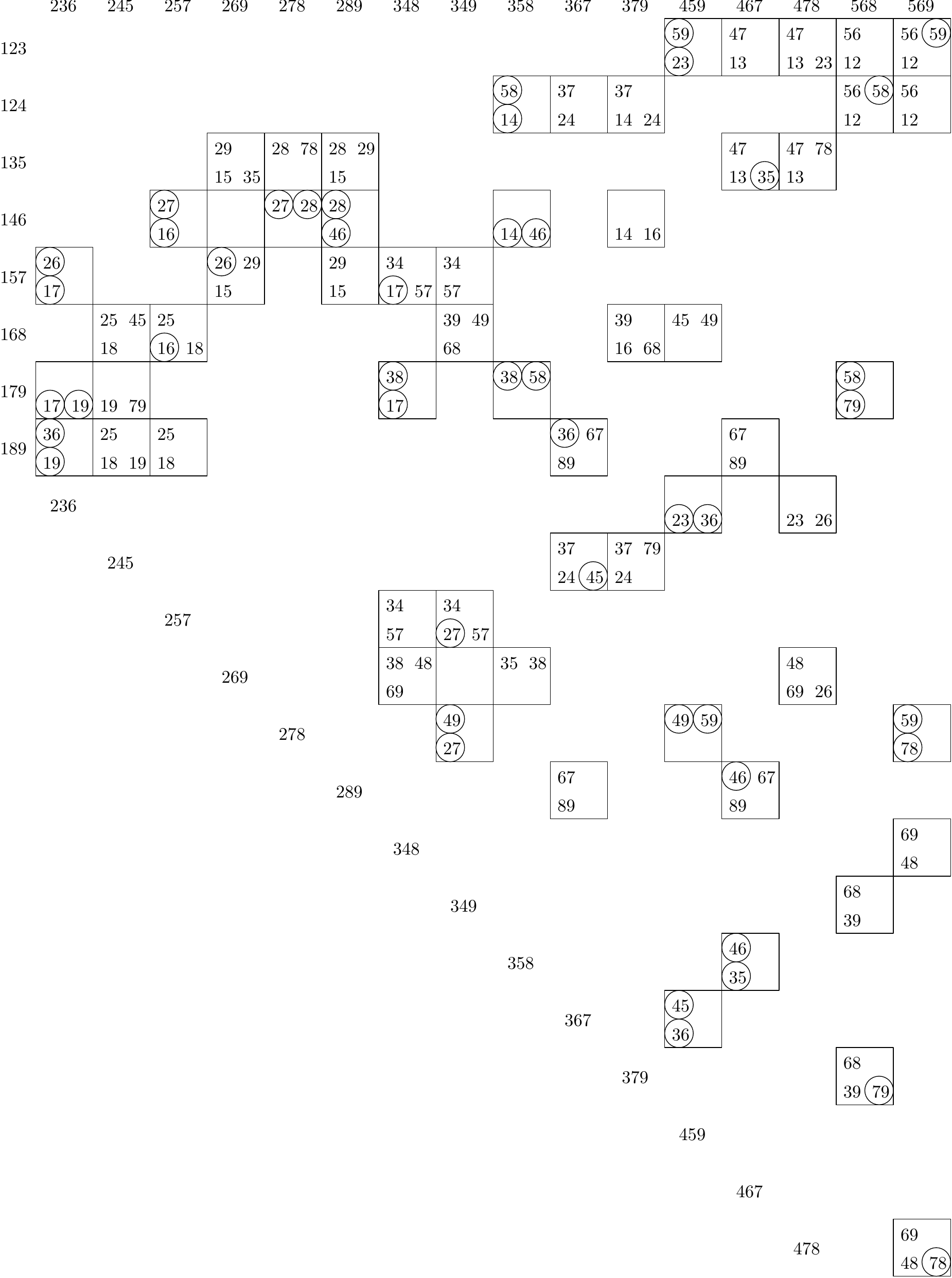}}
        \caption{Tracing the singular set in $|\Delta_1|$, second case.}
        \label{fig:8}
\end{figure}

\begin{paragraph}{Second case.}
Assume now that edges $56$, $57$, $67$ meet 
the singular set exactly once each, then so do $12$, $34$, and $89$ (the coupled edges). Figure \ref{fig:8} derives some consequences of these intersections. Note that all labels that were shaded in Figure \ref{fig:5} are now circled.

Since $\phi_{\psi}$ is supposed to be a (polyhedrally generic) polyhedral immersion, its convex hull $\mathrm{conv}(\phi_\psi(\left|\Delta_1\right|))$ is a convex polyhedron. All vertices and edges of $\mathrm{conv}(\phi_\psi(\left|\Delta_1\right|))$ are immersed edges and vertices of the neighborly triangulation $\Delta_1$. No self-intersection occurs on these edges, since that would be a contradiction to them being edges of  $\mathrm{conv}(\phi_{\psi}(\left|\Delta_1\right|))$. Therefore, the corresponding edges in $\Delta_1$ do not meet the singular set.

A convex polyhedron possesses at least four vertices, thus at least one of the immersed vertices $1$, $2$, $3$, $4$, $8$, $9$ is a vertex of the convex hull $\mathrm{conv}(\phi_{\psi}(\left|\Delta_1\right|))$. Furthermore, automorphism $\left(1 8 4 2 9 3\right)\left(5 6 7\right)$ of $\Delta_1$ exchanges these vertices. Without loss of generality (our choices so far were symmetric with respect to this automorphism)  we can assume that the (immersed) vertex $1$ is a vertex of $\mathrm{conv}(\phi_{\psi}(\left|\Delta_1\right|))$, and that at least three of the incident edges in $\Delta_1$ do not meet the singular set (as they are sent to edges of $\mathrm{conv}(\phi_{\psi}(\left|\Delta_1\right|))$ and thus cannot be involved in the self-intersection of the immersed surface). 

However, from the diagram we can conclude that besides edge $12$ (see above), edges $14$, $16$, $17$, and $19$ must meet the singular set. Furthermore, because triangle $269$ intersects triangle $157$, 
edge $15$ must pierce either triangle $269$ or $289$. Only \emph{two} edges incident to vertex $1$, $13$ and $18$, may be sent to edges of $\mathrm{conv}(\phi_{\psi}(\left|\Delta_1\right|))$. This contradiction proves that the second case cannot occur.
\end{paragraph}

Thus the $9$-vertex-triangulation $\Delta_1$ of $N_5$  is not polyhedrally immersible (realizable) in $\RR^3$.

\subsection{The second triangulation}
Let $\Delta_2$ now denote the second $9$-vertex-triangulation of $N_5$, diagrammed in Figure 
\ref{fig:9}. Assume that there exists a polyhedral realization of this triangulation, so that we may choose a polyhedrally generic immersion $\phi_{\psi}\colon\left|\Delta_2\right|\rightarrow\RR^3$. An intersection table has been prepared in Figure~\ref{fig:11}.
Due to the similar structure of both triangulations, the proof of polyhedral non-immersibility is also similar, yet for this triangulation it requires an additional argument at the end.

\begin{figure}[htbp]
        \centering
        {\includegraphics[width=0.5\linewidth]{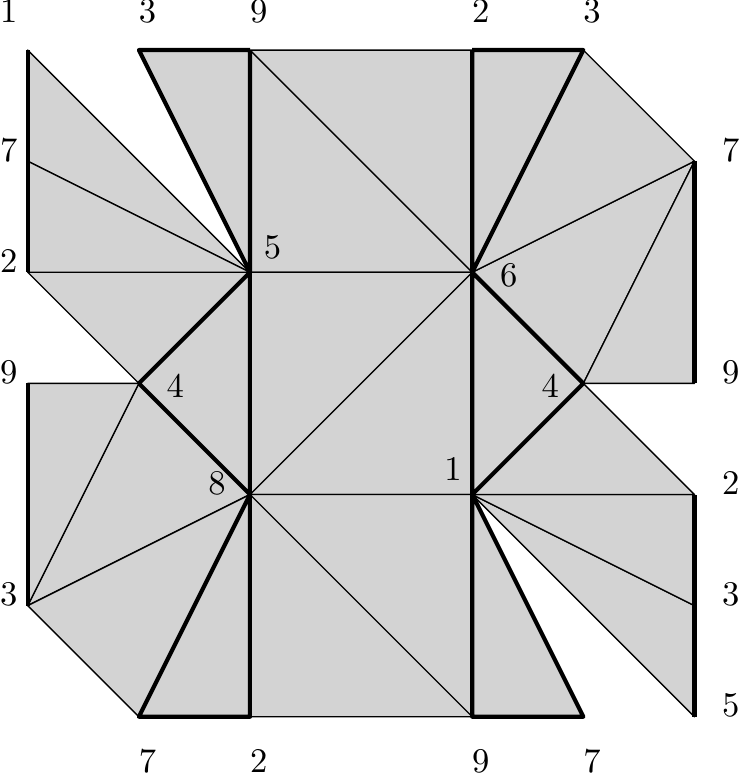}}
        \caption{The $9$-vertex-triangulation $\Delta_2$ of $N_5$.}
        \label{fig:9}
\end{figure}

\begin{figure}[htbp]
        \centering
        {\includegraphics[width=0.8\linewidth]{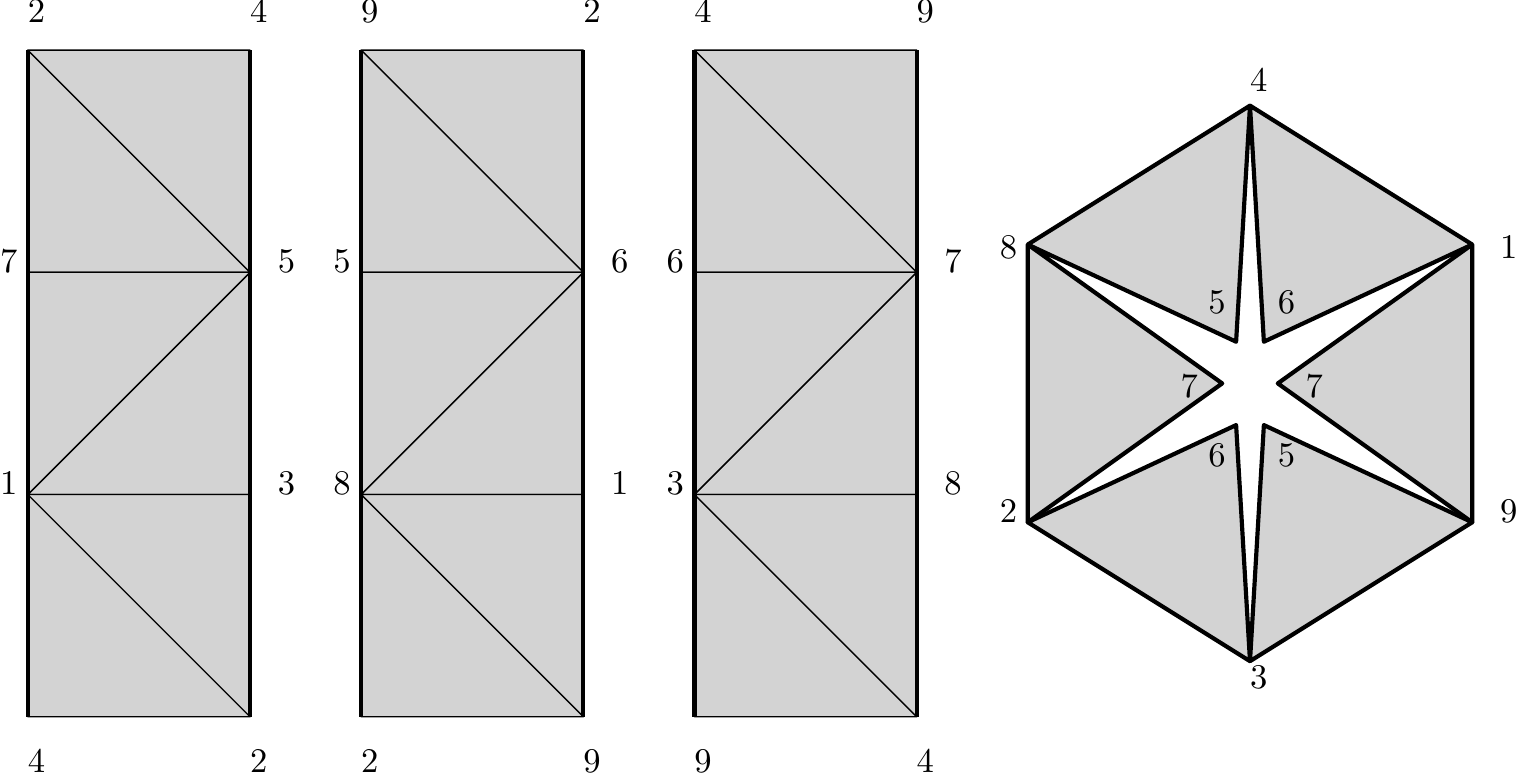}}
        \caption{$\Delta_2$ decomposes into three triangulated M\"obius strips and six remaining triangles.}
        \label{fig:10}
\end{figure}

As in Section \ref{subsec:first}, the triangulation in Figure \ref{fig:9} decomposes into three M\"obius strips and six additional triangles, see Figure \ref{fig:10}. Also, there are \emph{combinatorially coupled} edges, which can be derived from Figure~\ref{fig:11}:\\
\begin{tabular}{cccc}
$\left\{12,56\right\}$&$\left\{13,47\right\}$&$\left\{15,29\right\}$&$\left\{18,25\right\}$\\
$\left\{24,37\right\}$&$\left\{34,57\right\}$&$\left\{38,69\right\}$&$\left\{49,68\right\}$\\
$\left\{67,89\right\}$& & &\\ 
\end{tabular}

The automorphism-inducing permutation $\left(1 8 3\right)\left(5 6 7\right)\left(2 9 4\right)$ of the vertices of $\Delta_2$ exchanges the M\"obius strips in Figure \ref{fig:10}, and also exchanges the edges $24$, $29$ and $49$, which are therefore combinatorially equivalent. The curve underlying the edge cycle $24-49-29$ is orientation-reversing, and thus must meet the singular set an odd number of times (compare Lemma \ref{lem:1}). 
Similar to the situation for $\Delta_1$, Figure \ref{fig:11} reveals that every immersed edge may have intersections with at most two immersed triangles of the triangulation $\Delta_2$, as every edge label appears in precisely four boxes. In particular, each of the edges $24$, $29$ and $49$ can meet the singular set at most two times. Hence, our case distinction is between just one of $24$, $29$, $49$ intersecting the singular set precisely once, and precisely one such intersection for each of these three edges. 

\subsubsection{Case distinction}

\begin{paragraph}{First case.}
Without loss of generality (equivalence of edges $24$, $29$, $49$) assume that edge $29$ meets the singular set exactly once, whereas edges $24$ and $49$ meet the singular set exactly twice or not at all. 
Then edge $15$ (coupled with $29$) meets the singular set exactly once, whereas edges $37$ and $68$ meet the singular set exactly twice or not at all.

Consider the edge cycle $25-15-12$ in Figure \ref{fig:10}. Clearly, the curve underlying this cycle is orientation-reversing in $\left|\Delta_2\right|$. Lemma \ref{lem:1} requires an odd number of intersections of this curve with the singular set. Since edge $15$ is passed by the singular set exactly once, edges $12$ and $25$ must meet the singular set either both an odd number of times or both an even number of times. Because edge $12$ is combinatorially coupled with edge $56$, and edge $25$ is combinatorially coupled with edge $18$, edges $12$, $18$, $25$, and $56$ either all meet the singular set in an odd number of points or all meet the singular set in an even number of points. Knowing this, consider cycles $12-18-28$ and $25-56-26$, which are both orientation-preserving in $\left|\Delta_2\right|$, see Figure \ref{fig:9}. They have to meet the singular set in an even number of points by Lemma~\ref{lem:2} and, consequently, so do edges $26$ and $28$. Since edge $68$ was presumed to have two or no intersections with the singular set, this implies that the orientation-reversing cycle $26-68-28$ (see Figure \ref{fig:10}) meets the singular set in an even number of points. This contradiction to Lemma \ref{lem:1} proves that this first case cannot occur in a polyhedral immersion of $\Delta_2$.
\end{paragraph}

\begin{figure}[htbp]
        \centering
        {\includegraphics[width=1.0\linewidth]{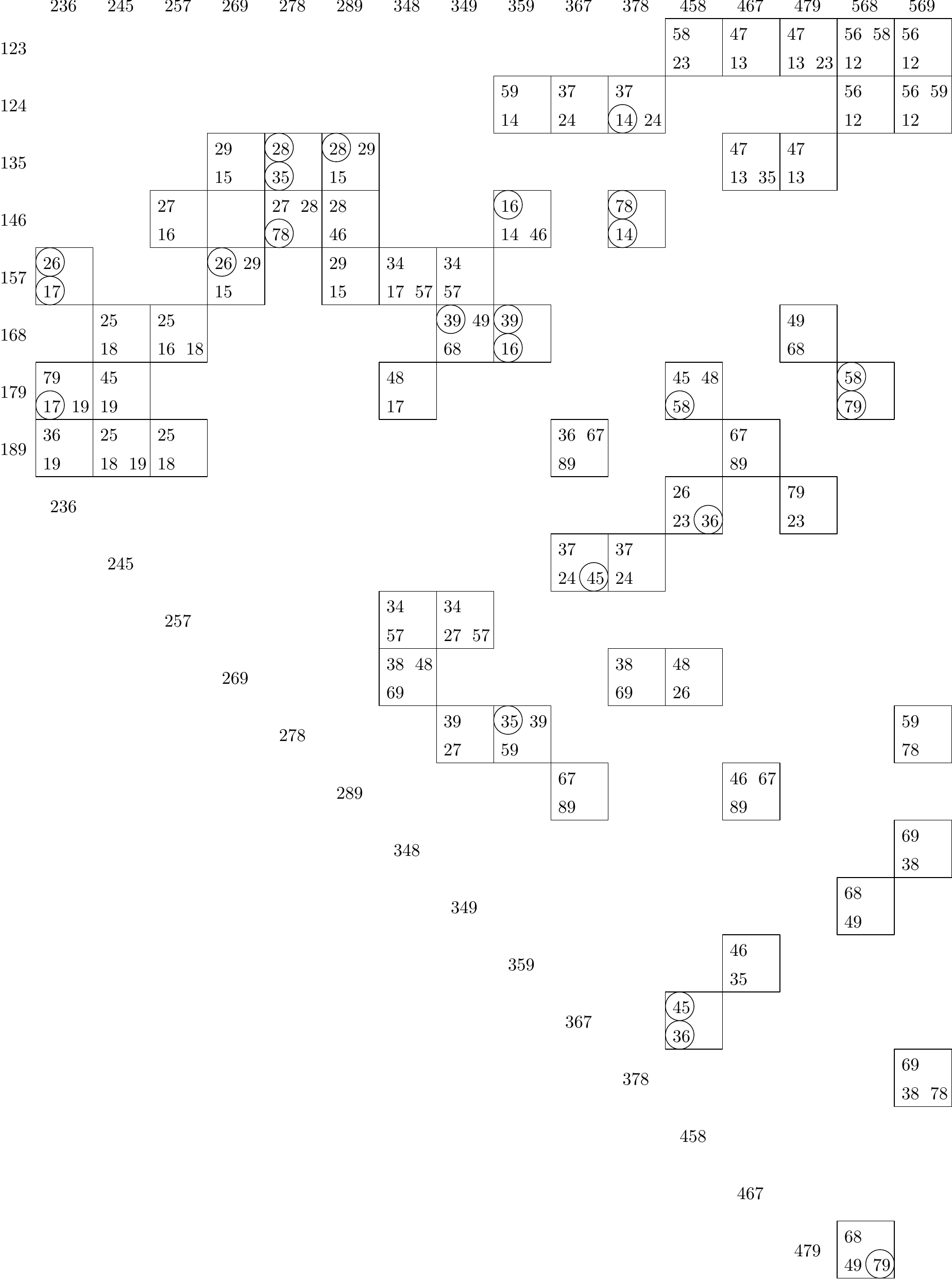}}
        \caption{Tracing the singular set in $|\Delta_2|$, second case.}
        \label{fig:11}
\end{figure}

\begin{paragraph}{Second case.}
Assume now that edges $24$, $29$, $49$ each meet the singular set exactly once, and so do their coupled edges $15$, $37$, and $68$. Figure \ref{fig:11} derives the consequences from this setup.  
 
Since $\Delta_2$ is a neighborly triangulation, all edges of the convex polyhedron $\mathrm{conv}(\phi_{\psi}(\left|\Delta_2\right|))$ are immersed edges of $\phi_{\psi}(\left|\Delta_2\right|)$. A convex polyhedron requires at least four vertices, therefore at least one of the vertices $1$, $3$, $5$, $6$, $7$, $8$ is sent to a vertex of $\mathrm{conv}(\phi_{\psi}(\left|\Delta_2\right|))$. Furthermore, the automorphism-inducing permutation $\left(1 6 3 5 8 7\right)\left(2 9 4\right)$ of the vertex set exchanges these vertices. Our previous assumptions were symmetric with respect to this automorphism, so we can now assume, without loss of generality, that (the immersed) vertex $1$ is a vertex of $\mathrm{conv}(\phi_{\psi}(\left|\Delta_2\right|))$. Then at least three edges incident to vertex $1$ in $\Delta_2$ cannot meet the singular set, as they are mapped to edges of $\mathrm{conv}(\phi_{\psi}(\left|\Delta_2\right|))$. 

We already know that $15$ meets the singular set (precisely once). From the intersection table in Figure \ref{fig:11} we conclude that edges $14$, $16$, and $17$ also meet the singular set, which means that the edges being sent to edges of $\mathrm{conv}(\phi_{\psi}(\left|\Delta_2\right|))$ incident with vertex $1$ must be among $\{12, 13, 18, 19\}$. Recall that polyhedral genericity (Definition \ref{def:1}) prohibits edges to intersect in their relative interiors. Note that this means that the double set of $\phi_{\psi}$ stays away from the boundary of $\mathrm{conv}(\phi_{\psi}(|\Delta_2|))$. At least three of the six possible cycles of immersed edges $12-23-13$, $12-28-18$, $12-29-19$, $13-38-18$, $13-39-19$, $18-89-19$ must lie on the boundary of $\mathrm{conv}(\phi_{\psi}(|\Delta_2|))$, and thus cannot have any point in common with the double set. However, edges $39$, $28$ and $29$ are met by the singular set, and only cycles $12-23-13$, $13-38-18$ and $18-89-19$ remain. Furthermore, the immersed edges $23$, $38$ and $89$ cannot form a simple closed polygonal curve connecting the neighbors of vertex $1$ on the boundary of $\mathrm{conv}(\phi_{\psi}(\left|\Delta_2\right|))$. 
This contradiction proves that the second case cannot occur.
\end{paragraph}

The preceding case distinction proves that $\Delta_2$ is not polyhedrally immersible in $\RR^3$. Since there were only two possible $9$-vertex-triangulations of $N_5$, $\Delta_1$ and $\Delta_2$, we have proved Theorem \ref{thm:main}.

\begin{theorem}\label{thm:main}
The surface $N_5$ does not admit a polyhedral realization (immersion) in $\RR^3$ with only $9$ vertices.
\end{theorem}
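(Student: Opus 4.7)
The plan is to dispatch the two combinatorially distinct $9$-vertex triangulations $\Delta_1$ and $\Delta_2$ separately, each by contradiction, assuming the existence of a polyhedrally generic immersion $\phi_\psi\colon|\Delta_i|\to\RR^3$. In both cases I would look first at the decomposition of the triangulation into three symmetric M\"obius strips plus six extra triangles; this decomposition exposes an automorphism cyclically permuting three combinatorially equivalent edges (call them $e_1,e_2,e_3$) whose union forms an orientation-reversing $3$-cycle. By Lemma~\ref{lem:1}, the number of transversal crossings of this cycle with the singular set must be odd.

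Next I would build an intersection table, using Cervone's edge-cut-analysis to strike out, a priori, any candidate intersection forbidden by the immersion property (two triangles sharing a vertex cannot intersect elsewhere). In both triangulations one sees from this table that every edge label appears in exactly four boxes, so each edge can contribute at most two points to the singular set. Combined with the parity constraint on $e_1+e_2+e_3$, this forces exactly two cases: either precisely one of the $e_i$ meets the singular set (once), or all three do (once each). By the symmetry argument we may assume without loss of generality which edge plays which role. I would then identify all combinatorially coupled pairs $\{ab,cd\}$ from the table; coupling forces the two edges of a pair to share the same number of crossings, which is a powerful propagator of constraints.

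In the \emph{first case}, the goal is to chain these coupling/parity consequences until a second orientation-reversing cycle is forced to have an even number of crossings, contradicting Lemma~\ref{lem:1}. For $\Delta_1$ I expect the witness cycle to sit inside a different M\"obius strip from the original one; for $\Delta_2$ one needs a slightly longer chain: deduce the parities of four coupled edges from an auxiliary orientation-reversing triangle $25$--$15$--$12$, then cross-check two orientation-preserving cycles using Lemma~\ref{lem:2}, and finally read off the parity of the offending cycle $26$--$68$--$28$.

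The \emph{second case} is the main obstacle, and it uses geometry beyond the combinatorial tracing. Because both triangulations are neighborly, the convex hull $\mathrm{conv}(\phi_\psi(|\Delta_i|))$ must be a convex polyhedron all of whose edges are images of edges of $\Delta_i$, and those edges cannot meet the singular set. At least four combinatorial vertices are sent to hull vertices; by the available automorphism I may assume vertex~$1$ is one of them, so at least three edges through~$1$ must avoid the singular set. From the now-heavily-filled intersection table I would show that too many edges through~$1$ are forced to pierce some triangle, leaving fewer than three candidates for hull edges and producing the contradiction. For $\Delta_2$ this counting is tight, so the final refinement is that any three surviving edges at~$1$ must in fact bound a simple closed polygonal cycle on the hull boundary formed by the opposite edges $23,38,89$; verifying that this closure fails is the last step. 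Once both cases collapse for both triangulations, Theorem~\ref{thm:main} follows since these are the only two $9$-vertex triangulations of $N_5$.
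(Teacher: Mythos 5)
Your proposal follows the paper's proof essentially step for step: the same decomposition into three triangulated Möbius strips, the same initial orientation-reversing $3$-cycle and parity obstruction from Lemma~\ref{lem:1}, the same intersection table with the "each edge label in exactly four boxes" bound, the same notion of combinatorially coupled edges as propagators, the same two-case split, the same auxiliary cycles ($25$--$15$--$12$ and $26$--$68$--$28$ for $\Delta_2$), and the same convex-hull / neighborliness argument in the second case, ending with the observation that $23$, $38$, $89$ cannot close up into the link of vertex $1$ on the hull boundary. This matches the paper's argument in both structure and nearly all specific details.
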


The number of vertices needed for a polyhedral realization of $N_5$ must be strictly greater than the minimal number of $9$ vertices needed for a triangulation, i.e., strictly greater than the Heawood bound. Do $10$ vertices suffice? This question was answered in the affirmative in the author's Diploma thesis \cite{l2009:peaiot2m}, and the result will be published in another article \cite{bl2016:peaiomt2mwfv}. In fact, even polyhedral immersions with threefold geometric symmetry exist.

\section{Remarks}\label{sec:4}

We briefly discuss possible alternative avenues for proving Theorem \ref{thm:main}. First, each of the M\"obius strips in the decompositions of $\Delta_1$ and $\Delta_2$ in Figures~\ref{fig:4} and \ref{fig:10}, respectively, must be embedded if a polyhedral realization exists. Unfortunately, this requirement alone does not lead to a quick decision of the realizability of the triangulations.

Second, a theorem of Banchoff \cite{b1974:tpasois} guarantees at least one triple point for any (polyhedrally) generic immersion of a surface of odd Euler characteristic. This fact may also serve as a starting point for investigating the course of the singular set for a purported polyhedrally generic realization of $\Delta_1$ or $\Delta_2$, ultimately resulting in a contradiction. However, pursuing this approach is more tedious than following the method presented in this article. By contrast, such an approach works well in other cases; in \cite{l2009:peaiot2m} and \cite{bl2016:peaiomt2mwfv}, it was shown that many $9$-vertex-triangulations of the projective plane and the projective plane with one handle are incompatible with triple points, rendering them non-realizable in our context. 

The purpose of this article was to demonstrate that settling the realizability question by hand, in particular proving polyhedral non-realizability, is possible in certain instances. The non-realizable $9$-vertex triangulations of $N_5$ of Theorem \ref{thm:main} are from an interesting class, namely neighborly triangulations, and all of them could be excluded due to their very limited number.  Further non-realizability results have been obtained by variants of the method presented in this article, e.g., for additional triangulated Klein bottles and projective planes in \cite{l2009:peaiot2m}. Naturally, the small number of vertices and the high degree of connectivity between the vertices play an important role for the feasibility of the method, which is only suitable for non-orientable surfaces as it relies upon tracing the self-intersection. 

Remarkably, for orientable surfaces, the tetrahedron (genus $0$) and the Cs\'asz\'ar torus \cite{c1949:apwd} with $7$ vertices (genus $1$) are embedded polyhedra which are both realized from neighborly triangulations. Yet the next class of neighborly orientable triangulations, in terms of number of vertices, does not admit a single such realization; none of the $59$ triangulations of the orientable surface of genus $6$ with $12$ vertices are realizable, see \cite{s2010:nmvtossnuomass}.

For non-orientable neighborly triangulations, Brehm \cite{b1983:antms} noted that $6$ vertices do not suffice to create the required triple point in a polyhedral realization of a projective plane (see also the discussion on triple points above). In this paper, we have now demonstrated non-realizability for the neighborly $9$-vertex-triangulations of $N_5$. It would be interesting to know what the situation is for the fourteen neighborly $10$-vertex triangulations of~$N_7$ which are listed in \cite{lutz:manifoldpage}.

\section{Acknowledgments}

The author would like to thank Ulrich Brehm for the support and guidance received while working on her Diploma Thesis at TU Dresden under his supervision, as well as for the continuing stimulating discussions on the subject. 

\end{document}